\newcommand{\R}{\mathbf{R}}
\newcommand{\ball}{\mathbf{B}}
\newcommand{\sphere}{\mathbf{S}}
\DeclareMathOperator{\E}{\mathbf{E}}
\DeclareMathOperator{\prob}{\mathbf{P}}
\DeclareMathOperator{\area}{area}
\DeclareMathOperator{\interior}{int}
\DeclareMathOperator{\vol}{vol}
\newcommand{\voln}{\vol_n}
\DeclarePairedDelimiter\abs{\lvert}{\rvert}
\DeclarePairedDelimiter\norm{\lVert}{\rVert}
\newcommand{\blank}{{\mspace{1mu}\cdot\mspace{1mu}}}
\newtheorem{theorem}{Theorem}[section]
\newtheorem{proposition}[theorem]{Proposition}
\newtheorem{conjecture}[theorem]{Conjecture}
\theoremstyle{definition}
\theoremstyle{remark}
\numberwithin{equation}{section}
\renewcommand{\geq}{\geqslant}
\renewcommand{\le}{\leqslant}
\renewcommand{\ge}{\geqslant}
\renewcommand{\vec}{\mathbf}
\newcommand{\diff}{\mathop{}\!d}
\begin{document}

\title{Straight-line optimality in Bellman's lost-in-a-forest problem for Euclidean balls}

\author{David Treeby}
\address{School of Mathematics, Monash University, Australia}
\email{david.treeby@monash.edu.au}

\author{Edward Wang}
\address{School of Mathematics and Statistics, University of Melbourne, Australia}
\email{edward.wang@student.unimelb.edu.au}

\subjclass[2020]{Primary 60D05; Secondary 52A40, 52A22}

\begin{abstract}
  We prove that among all unit-speed paths, a straight line minimises the expected escape time from a ball in $\mathbf{R}^n$, solving the min-mean variant of Bellman's Lost~in~a~Forest problem for ball-shaped forests. The proof uses the Kneser--Poulsen conjecture in the plane, together with results on polygonal chain straightening in higher dimensions. Moreover, we calculate this minimal escape time by deriving the expected linear distance to the boundary of a ball in $n$ dimensions.
\end{abstract}

\date{\today}

\maketitle

\section{Introduction}

In 1955, Richard Bellman~\cite{bellman} posed the now famous ``Lost~in~a~Forest'' problem, which was later popularised by Finch and Wetzel in \cite{finch}. The problem can be phrased as follows:
\begin{quote}
  A hiker is lost in a forest whose shape and dimensions are precisely known to him. What is the best path for him to follow to escape from the forest?
\end{quote}
Bellman's original note proposed two interpretations of ``best''. In the first, the \emph{maximum time} to escape is minimised; in the second, the \emph{expected time} is minimised (both assuming a unit-speed path). The first interpretation has been studied extensively, with a recent preprint by Deng~\cite{Deng24} proposing a general solution for forest boundaries of any shape. The second interpretation, however, has seen surprisingly little attention. To the best of our knowledge, it has been explored in the literature on only two occasions: in a 1961 paper by Gluss~\cite{Gluss61}, and a more recent work by Finch and Shonder~\cite{Finch16}, who call for more study on this variation. Indeed, Finch and Wetzel's survey~\cite{finch} notes that ``little is known'' about this interpretation, whilst Ward~\cite{Ward08} mentions that it ``generally [has] not been investigated''. We speculate that the little attention given to this variation is borne of its difficulty. To date, no definitive and unconditional result has been found for \emph{any} domain. 

In this paper, we take the forest to be the unit disk in $\R^2$, later extending to unit balls in $\R^n$. The maximal time (min-max) problem for such forests was solved long ago---in fact, Finch and Wetzel~\cite{finch} note that this combination of forest shape and optimisation criterion was the first variation of Bellman's problem to be investigated at all---by Gross~\cite{Gross55} in 1955. We therefore focus on the expected time (min-mean) problem, where the hiker's initial position is assumed to be uniformly distributed in the forest. Finch and Shonder~\cite{Finch16} were the first to consider this variation, conjecturing that the optimal path is a straight line, but proving this only for paths of small curvature. Here, we prove its optimality without any restrictions, using the Kneser--Poulsen conjecture, proven in $\R^2$ by Bezdek and Connelly in \cite{BezdekConnelly2002}, as our main tool. In doing so, we hope to spark interest in this neglected variation of the Lost~in~a~Forest problem. We later generalise this result to higher dimension analogues, proving that a straight line path remains optimal for $n$-balls.

We note that the optimality of the straight line is not obvious a priori. Since the mass of the uniform distribution concentrates near the boundary of a ball, a hiker is more likely to start near the edge. One might therefore suspect that a local search strategy, attempting to detect the boundary quickly before committing to a trajectory, could yield a lower expected time than a blind linear path. Our result demonstrates that the global geometry of the ball overrides this local intuition.

Finally, we derive the exact minimal expected escape time for $n$-balls when travelling along a straight line path, and conclude the paper with a discussion of open conjectures involving forests shaped as regular polygons.

\section{Lost in a disk}

We begin in two dimensions by examining a forest in the shape of the unit disk $\ball^2 = \{ x \in \R^2 : \norm{x} \le 1 \}$, with $\norm{\blank}$ being the Euclidean norm.

A \emph{path} is a continuous curve $\gamma \colon [0, \infty) \to \R^2$ that is arc-length parametrised and satisfies $\gamma(0) = 0$. That is, for any $t > 0$, the length of the curve from $\gamma(0)$ to $\gamma(t)$ is exactly $t$. Using such a definition will allow for curves with sharp corners, such as the polygonal chains which we require later. The \emph{escape time} $T_{\gamma}(x)$ of a path $\gamma$ starting from a point $x \in \ball^2$ is defined as \[
  T_{\gamma}(x) \coloneq \inf \{ t \ge 0 : \norm{x+\gamma(t)} \ge 1 \},
\] the first time at which the path reaches the boundary of the forest. If the path never exits the unit disk, we set $T_\gamma(x) = \infty$. Due to the rotational symmetry of $\ball^2$, the distribution of the escape time is invariant under rotations of the path. Hence, we may fix the orientation of a path without loss of generality.

Let $X$ be a random variable distributed uniformly in $\ball^2$. The \emph{expected escape time} of a path $\gamma$ is \[
  J(\gamma) \coloneq \E[T_{\gamma}(X)].
\] With this notation in place, we state our result in Theorem~\ref{thm:2d}.

\begin{theorem} \label{thm:2d}
  Let \(\gamma\) be any path in \(\R^2\). Then, the expected escape time from the unit disk \(\ball^2\) satisfies \[
    J(\gamma) \ge J(\ell),
  \] where \(\ell\) is a straight line.
\end{theorem}

We first establish a geometric representation of the expected escape time. As $T_{\gamma}(X)$ is a non-negative random variable, we may write its expected value as an integral of the tail probability,
\begin{equation} \label{eqn:tail_integral}
  J(\gamma) = \E[T_{\gamma}(X)] = \int_{0}^{\infty} \prob(T_{\gamma}(X) > t) \diff t.
\end{equation}

Define the \emph{strict non-escape region} at time $t$ as the set of all starting points $x$ that remain strictly within the forest up to time $t$: \[
  U_{\gamma}(t) \coloneq \{ x \in \ball^2 : \norm{x + \gamma(s)} < 1\ \text{for all}\ 0 \le s \le t \}.
  \] Clearly, $T_{\gamma}(x) > t$ if and only if $x \in U_{\gamma}(t)$. Let $\ball(x)$ denote the closed unit disk centered at $x$. The region $U_{\gamma}(t)$ can be equivalently written as the intersection of open disks, \[
  U_{\gamma}(t) = \bigcap_{s \in [0, t]} \interior\bigl(\ball(-\gamma(s))\bigr).
  \] To compute its area, we instead consider the intersection of the corresponding closed disks: \[
  S_{\gamma}(t) \coloneq \bigcap_{s \in [0, t]} \ball(-\gamma(s)).
\] As an arbitrary intersection of closed, convex sets, $S_{\gamma}(t)$ is itself a closed convex set. A standard result in geometric measure theory (see, e.g., Lang~\cite[Theorem 1]{Lang86}) establishes that the boundary of any convex set has Lebesgue measure zero. Consider a point $x \in S_{\gamma}(t) \setminus U_{\gamma}(t)$. Such an $x$ must satisfy $\norm{x + \gamma(s)} = 1$ for some $s \in [0, t]$, which prevents $x$ from lying in the interior of $S_{\gamma}(t)$, since any open ball around $x$ would inevitably contain points outside $\ball(-\gamma(s))$ and therefore outside $S_{\gamma}(t)$. Consequently, $S_{\gamma}(t) \setminus U_{\gamma}(t) \subseteq \partial S_{\gamma}(t)$, which is the boundary of a convex set and thus has measure zero. Therefore $U_{\gamma}(t)$ and $S_{\gamma}(t)$ differ only by a set of measure zero and hence have identical area.

The \emph{non-escape probability} $\prob(T_{\gamma}(X) > t)$ is thus equal to the ratio of the area of $S_{\gamma}(t)$ to the area of $\ball^2$, yielding \[
  \prob(T_{\gamma}(X) > t) = \frac{\area(S_{\gamma}(t))}{\area(\ball^2)} = \frac{1}{\pi} \area(S_{\gamma}(t)).
\] Substituting this back into \eqref{eqn:tail_integral} gives the key identity
\begin{equation} \label{eqn:area_integral}
  J(\gamma) = \frac{1}{\pi}\int_{0}^{\infty} \area(S_{\gamma}(t)) \diff t,
\end{equation}
creating a clear geometric objective---in order to minimise the expected escape time, we must minimise the integral of the area of $S_{\gamma}(t)$. We do this pointwise by bounding the integrand, which is an intersection of closed unit disks, below, using the Kneser--Poulsen conjecture. Specifically, we use the two-dimensional case, proved by Bezdek and Connelly in \cite{BezdekConnelly2002}.

If a finite collection of unit balls in $\R^n$ is rearranged such that the distance between each pair of centers does not decrease, we say that such a rearrangement is an \emph{expansion}. Formally, if $\vec{p} = (\vec{p}_1, \dots, \vec{p}_N)$ and $\vec{q} = (\vec{q}_1, \dots, \vec{q}_N)$ are two configurations of $N$ points in $\R^n$ such that $\norm{ \vec{p}_i - \vec{p}_j } \le \norm{ \vec{q}_i - \vec{q}_j }$ for all pairs $i$ and $j$, then $\vec{q}$ is an expansion of $\vec{p}$.

The conjecture, posed independently by Kneser~\cite{Kneser1955} and Poulsen~\cite{Poulsen1954}, states that expansions do not decrease the volume of the union of the balls, and do not increase the volume of the intersections. The conjecture remains wide open for arbitrary expansions beyond $\R^2$, despite the result seeming geometrically intuitive. Bezdek and Connelly's planar proof even handles disks of varying radii; we give the formal statement in Theorem~\ref{thm:KP2}. For a recent survey of the Kneser--Poulsen conjecture, we refer the reader to \cite{Csikos2018}.

\begin{theorem}[Kneser--Poulsen in the plane] \label{thm:KP2}
  Let \(\vec{q} = (\vec{q}_1, \dots, \vec{q}_N)\) be an expansion of \(\vec{p} = (\vec{p}_1, \dots, \vec{p}_N)\) in \(\R^2\), and let \(\ball(\vec{p}_i, r_i)\) denote the closed disk centered at \(\vec{p}_i\) with radius \(r_i\). Then,
  \begin{equation}
    \area \biggl(\, \bigcup_{i = 1}^{N} \ball(\vec{p}_i, r_i) \biggr) \le \area \biggl(\, \bigcup_{i = 1}^{N} \ball(\vec{q}_i, r_i) \biggr)
  \end{equation} and 
  \begin{equation} \label{eqn:KP2}
    \area \biggl(\, \bigcap_{i = 1}^{N} \ball(\vec{p}_i, r_i) \biggr) \ge \area \biggl(\, \bigcap_{i = 1}^{N} \ball(\vec{q}_i, r_i) \biggr)
  \end{equation}
  for any choice of radii \(r_1, \dots, r_N > 0\).
\end{theorem}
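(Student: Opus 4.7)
The plan combines a continuous-motion realisation of the expansion, carried out in $\R^4$ rather than in the plane where it may fail, with Csikós's differential identity for the volumes of intersections and unions of moving balls.

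\emph{Step 1: Expansive lift to $\R^4$.} In the plane, an expansion need not admit a continuous motion along which every pairwise distance is monotonically non-decreasing, since the set of configurations of $N$ points in $\R^2$ with prescribed minimum pairwise distances can be disconnected. Embedding both configurations into $\R^4$ via $x \mapsto (x, \vec{0})$ preserves all pairwise distances, and the two extra dimensions are sufficient to realise any expansion by such a continuous motion. I would construct an explicit piecewise analytic motion $\vec{p}(t) \in (\R^4)^N$, $t \in [0, 1]$, from $\tilde{\vec{p}}$ to $\tilde{\vec{q}}$ along which $\frac{\diff}{\diff t}\norm{\vec{p}_i(t) - \vec{p}_j(t)} \ge 0$ for every pair $(i, j)$, by handling pairs sequentially and using the extra dimensions to swing points past obstructions without shortening any other pairwise distance.

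\emph{Step 2: Csikós's formula.} For a smooth motion of $N$ centres in $\R^d$ with fixed radii, Csikós's identity expresses
\begin{equation*}
  \frac{\diff}{\diff t}\vol_d\Bigl(\bigcap_i \ball_d(\vec{p}_i(t), r_i)\Bigr) = -\sum_{i < j} W_{ij}(t)\,\frac{\diff}{\diff t}\norm{\vec{p}_i(t) - \vec{p}_j(t)},
\end{equation*}
with non-negative weights $W_{ij}(t) \ge 0$; the companion identity for unions has the opposite sign. Applied in $\R^4$ along the motion from Step~1, this at once gives the four-dimensional analogues of both inequalities of the theorem for the $4$-balls with centres $\tilde{\vec{p}}_i$ and $\tilde{\vec{q}}_i$ and common radii $r_i$, valid for any positive choice of radii.

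\emph{Step 3: Descent from $\R^4$ to the plane.} Since the endpoint centres lie in $\R^2 \times \{\vec{0}\}$, Fubini in the two orthogonal coordinates yields the integral representation
\begin{equation*}
  \vol_4\Bigl(\bigcap_i \ball_4(\tilde{\vec{p}}_i, r_i)\Bigr) = \int_{\R^2} \area\Bigl(\bigcap_i \ball_2\bigl(\vec{p}_i, \sqrt{r_i^2 - \norm{y}^2}\bigr)\Bigr) \diff y,
\end{equation*}
and analogously for unions, expressing each 4D volume as a Euclidean integral of planar areas at shrunken radii. Converting this integral comparison into the pointwise planar inequality is the subtlest step, since integral inequalities alone do not imply pointwise ones. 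I would therefore apply Csikós's identity not only to the full 4D intersection but slice-by-slice, tracking the area of each 2D section transverse to the embedded plane along the motion of Step~1 directly; at the endpoints, the section at zero transverse height reduces to the planar intersection of the theorem statement, yielding the inequality. The chief obstacles are Step~1 (the monotone connectivity genuinely fails in $\R^2$ and even in $\R^3$, so the two extra dimensions are essential) and the pointwise descent in Step~3; Step~2 is then a direct application of a known differential formula.
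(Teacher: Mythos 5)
The paper does not actually prove Theorem~\ref{thm:KP2}: it is quoted verbatim from Bezdek and Connelly~\cite{BezdekConnelly2002} and used as a black box. Your proposal is therefore an attempt to reprove that cited theorem, and its outline does track the published argument (lift to $\R^4$, realise the expansion by a monotone analytic motion there, apply Csik\'os's formula, descend to the plane). Two of your three steps, however, contain genuine gaps. In Step~1, the claim is true but your construction is both unnecessary and misattributed. The standard lift is the single analytic motion $\vec{p}_i(t) = \bigl(\cos(\tfrac{\pi t}{2})\,\vec{p}_i,\ \sin(\tfrac{\pi t}{2})\,\vec{q}_i\bigr)$ in $\R^2 \times \R^2 = \R^4$, for which each squared pairwise distance equals $\cos^2(\tfrac{\pi t}{2})\norm{\vec{p}_i - \vec{p}_j}^2 + \sin^2(\tfrac{\pi t}{2})\norm{\vec{q}_i - \vec{q}_j}^2$ and hence increases monotonically. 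This requires $2d$ dimensions, not ``$d$ plus two extra''; the plane is precisely the case $2d = d+2$, which is why Kneser--Poulsen is known only there. A pair-by-pair scheme of ``swinging points past obstructions'' carries no guarantee that fixing one pair does not decrease another distance, and you give no argument that it terminates in a globally monotone motion.

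Step~3 is the more serious gap. Slicing transverse to the embedded plane only makes sense at the two endpoints: during the motion the centres leave $\R^2 \times \{\vec{0}\}$, so a fixed two-dimensional section is not an intersection of disks centred in that section and Csik\'os's planar formula cannot be applied to it ``slice-by-slice.'' Moreover, as you yourself note, the Fubini identity together with the four-dimensional inequality for all radii does not imply the pointwise planar inequality --- an inequality between one-parameter families of volumes does not pass to their derivatives --- so Steps~1 and~2 alone deliver only the $\R^4$ statement, which is strictly weaker than the theorem. Closing this gap is the technical heart of~\cite{BezdekConnelly2002}: one first shows, via the higher-dimensional Archimedes projection lemma (this is exactly where two extra dimensions, rather than one, are forced), that $\area\bigl(\bigcap_i \ball(\vec{p}_i, r_i)\bigr)$ equals $\tfrac{1}{\pi}$ times the derivative at $s = 0$ of $\vol_4\bigl(\bigcap_i \ball_4(\vec{p}_i, \sqrt{r_i^2 + s})\bigr)$, and then controls the difference of these derivatives between the two endpoint configurations using the explicit wall terms in Csik\'os's formula along the motion, not merely their sign. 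Without that dedicated lemma, your proposal does not reach the planar conclusion.
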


Armed with this tool, our proof of Theorem~\ref{thm:2d} hinges on the observation that, for any partition $P$, the sampled configuration along a straight line is an expansion of the sampled configuration along an arbitrary path.

\begin{figure}[htbp]
  \centering
  \begin{tikzpicture}[
    scale=1,
    >=Stealth,
    pathnode/.style={circle, fill=black!80, inner sep=2pt},
    curve/.style={line width=1.8pt, color=blue!65!black, line cap=round},
    chord/.style={line width=1pt, color=black!50, dashed},
    straight/.style={line width=1.8pt, color=red!70!black, line cap=round},
    arrowstyle/.style={-{Stealth[length=3mm]}, line width=1.8pt}
    ]

    \coordinate (P0) at (0, 2.5);
    \coordinate (P1) at (1.8, 3.3);
    \coordinate (P2) at (4.0, 2.1);
    \coordinate (P3) at (6.0, 3.0);
    \coordinate (PEnd) at (7.2, 3.5);

    \draw[curve] (P0) to[out=45, in=170] (P1) 
      to[out=-10, in=160] (P2) 
      to[out=-20, in=240] (P3);
    \draw[arrowstyle, color=blue!65!black] (P3) to[out=70, in=180] (PEnd) 
      node[right, font=\normalsize, color=blue!65!black] {$\gamma$};

    \draw[chord] (P0) -- (P1);
    \draw[chord] (P1) -- (P2);
    \draw[chord] (P2) -- (P3);

    \foreach \i in {0,1,2,3} {
      \node[pathnode] at (P\i) {};
    }

    \node[font=\small] at ($(P0)+(-0.25,-0.25)$) {$t_0$};
    \node[font=\small] at ($(P1)+(0,0.35)$) {$t_1$};
    \node[font=\small] at ($(P2)+(0,-0.35)$) {$t_2$};
    \node[font=\small] at ($(P3)+(-0.3,0.3)$) {$t_3$};

    \def\yLine{0}
    \def\s{2.5}
    \def\sTwo{2.9}
    \def\sThree{2.5}

    \coordinate (Q0) at (0, \yLine);
    \coordinate (Q1) at ($(\s, \yLine)$);
    \coordinate (Q2) at ($(Q1)+(\sTwo, 0)$);
    \coordinate (Q3) at ($(Q2)+(\sThree, 0)$);

    \draw[straight] (Q0) -- (Q3);
    \draw[arrowstyle, color=red!70!black] (Q3) -- ++(1.2, 0)
      node[right, font=\normalsize, color=red!70!black] {$\ell$};

    \foreach \i in {0,1,2,3} {
      \node[pathnode] at (Q\i) {};
    }

    \foreach \i in {0,1,2,3} {
      \node[font=\small] at ($(Q\i)+(0,-0.4)$) {$t_\i$};
    }

    \draw[decorate, decoration={brace, amplitude=5pt, mirror}, line width=0.6pt, black!50] 
      ($(P0)+(0.05,-0.2)$) -- ($(P1)+(0.05,-0.2)$) 
      node[midway, below=6pt, font=\scriptsize\itshape, black!60] {$d$};

    \draw[decorate, decoration={brace, amplitude=5pt, mirror}, line width=0.6pt, black!50] 
      ($(Q1)+(0,0.2)$) -- ($(Q0)+(0,0.2)$) 
      node[midway, above=6pt, font=\scriptsize\itshape, black!60] {$d' \geq d$};
  \end{tikzpicture}
  \caption{The sampled configuration along a straight line is an expansion of the sampled configuration along a non-linear path.}
\end{figure}

Since Theorem~\ref{thm:KP2} is formulated for discrete sets, our argument proceeds by sampling the path at finitely many times and applying the theorem to the resulting point configurations. Passing to the limit allows us to apply inequality \eqref{eqn:KP2} to the integral in \eqref{eqn:area_integral}, which swiftly yields the result by demonstrating that a straight line path minimises the intersection area and thus the expected escape time.

\begin{proof}[Proof of Theorem~\ref{thm:2d}]
  From \eqref{eqn:area_integral}, it suffices to prove that at any time $t>0$, the area of the non-escape region for an arbitrary path $\gamma$ is bounded below by that of a straight line path $\ell$. That is, we must show $\area(S_{\gamma}(t)) \ge \area(S_{\ell}(t))$ at any given time.

  Without loss of generality, due to rotational symmetry, let $\ell(s) = (s, 0)$. Fix $t > 0$ and let $P = \{ 0 = t_0 < t_1 < \dots < t_k = t \}$ be a partition of $[0, t]$. Consider $\vec{p} = (\vec{p}_0, \dots, \vec{p}_k)$ where $\vec{p}_i = -\gamma(t_{i})$, and $\vec{q} = (\vec{q}_0, \dots, \vec{q}_k)$ where $\vec{q}_i = -\ell(t_i)$, which are the centers of the translated balls along $-\gamma$ and $-\ell$, respectively. Define \[
    K_P(\gamma) \coloneq \bigcap_{i = 0}^{k} \ball(\vec{p}_i), \qquad 
    K_P(\ell) \coloneq \bigcap_{i = 0}^{k} \ball(\vec{q}_i)
  \] to be discrete approximations of the non-escape regions of $\gamma$ and $\ell$ respectively.

  Since the path $\gamma$ is arc-length parametrised, the Euclidean distance between two points on $\gamma$ satisfies \[
    \norm{\gamma(u) - \gamma(v)} \le \abs{u - v}.
    \] Hence for any pair of times $t_i, t_j \in P$, we have \[
    \norm{\gamma(t_i) - \gamma(t_j)} \le \abs{t_i - t_j} = \norm{\ell(t_i) - \ell(t_j)},
    \] with the last equality following from the definition of $\ell$. Thus, $\norm{\vec{p}_i - \vec{p}_j} \le \norm{ \vec{q}_i - \vec{q}_j}$, so $\vec{q}$ is an expansion of $\vec{p}$, and applying Theorem~\ref{thm:KP2} yields \[
    \area(K_{P}(\gamma)) \ge \area(K_{P}(\ell)).
    \] Moreover, we have \[
    S_{\ell}(t) = \bigcap_{s \in [0, t]} \ball(-\ell(s)) \subseteq K_P(\ell),
  \] which means 
  \begin{equation} \label{eqn:area_inequality}
    \area(K_{P}(\gamma)) \ge \area(S_{\ell}(t))
  \end{equation}
  for every finite partition $P$ at time $t > 0$.

  Let $(P_n)$ be a sequence of partitions of $[0, t]$ such that $P_{n+1}$ is a refinement of $P_n$ and $\bigcup P_n$ is dense in $[0, t]$. Set \[
    K_{n} \coloneq K_{P_n}(\gamma) = \bigcap_{s \in P_n} \ball(-\gamma(s)).
    \] Since $P_{n+1} \supseteq P_{n}$, more disks are being intersected, so $K_{n+1} \subseteq K_n$, forming a decreasing sequence of compact sets. Clearly, as $S_{\gamma}(t)$ intersects over all $s \in [0, t]$, we have $S_{\gamma}(t) \subseteq K_i$ for all $i$. By continuity of $\gamma$ and the density of the partitions, we obtain \[
    \bigcap_{n = 1}^{\infty} K_n = S_{\gamma}(t).
    \] Continuity of the Lebesgue measure from above gives us \[
    \lim_{n \to \infty} \area(K_n) = \area(S_{\gamma}(t)).
    \] Combining this with \eqref{eqn:area_inequality} yields the desired inequality \[
    \area(S_{\gamma}(t)) \ge \area(S_{\ell}(t)),
  \] and integrating over $t \in [0, \infty)$ implies $J(\gamma) \ge J(\ell)$.
\end{proof}

\section{Generalising to higher dimensions}

The argument used in the proof of Theorem~\ref{thm:2d} relied on the discrete Kneser--Poulsen theorem in $\R^2$ for arbitrary expansions. As such, the main thrust of that proof does not readily carry over to $\R^n$ without modification, primarily because the Kneser--Poulsen conjecture remains unproven in dimensions greater than $2$. It is worth noting that the conjecture has been resolved in the case of \emph{continuous} expansions in all dimensions, by Csik{\'o}s~\cite{Csikos1998}. A point configuration $\vec{q}$ is a \emph{continuous} expansion of $\vec{p}$ if there exists a motion $\vec{p}(t) = (\vec{p}_1(t), \dots, \vec{p}_N(t))$ such that $\vec{p}(0) = \vec{p}$ and $\vec{p}(1) = \vec{q}$, $\norm{\vec{p}_i(t) - \vec{p}_j(t)}$ is monotonically increasing, and $\vec{p}_i$ is continuous. However, an arbitrary discrete expansion in $\R^n$ need not arise from a continuous expansive motion in the same dimension, which is precisely the obstruction here.

If we were able to approximate a path by discrete segments (a \emph{polygonal chain}), and show that a straight line is a continuous expansion of the chain, then we would be able to invoke the continuous Kneser--Poulsen theorem to prove that a straight line path is optimal in $\R^n$. However, this is not always possible---Demaine and Eisenstat~\cite{Demaine2011} proved that there exist ``locked'' chains in $\R^n$ for $n \ge 3$ that cannot be straightened with an expansive motion. Despite this, they also showed that open chains in $\R^n$ \textit{can} always be straightened if the motion was allowed to move into a higher dimension. Specifically, they proved that any open chain in $\R^n$, for $n \ge 3$, can be straightened with an expansive motion when embedded into $\R^{n+1}$.

Crucially, Bezdek and Connelly~\cite{BezdekConnelly2002}, in the same paper in which they resolved the Kneser--Poulsen conjecture in the plane, proved an analogue of the conjecture which allows the motion to move into two extra dimensions.
\begin{theorem}[Bezdek--Connelly] \label{thm:bezdekconnelly}
  If \(\vec{p} = (\vec{p}_1, \dots, \vec{p}_N)\) and \(\vec{q} = (\vec{q}_1, \dots, \vec{q}_N)\) are two point configurations in \(\R^n\), such that \(\vec{q}\) is a piecewise-smooth expansion of \(\vec{p}\) in \(\R^{n+2}\), then
  \begin{equation*}
    \voln \biggl(\, \bigcup_{i = 1}^{N} \ball(\vec{p}_i, r_i) \biggr) \le \voln \biggl(\, \bigcup_{i = 1}^{N} \ball(\vec{q}_i, r_i) \biggr)
  \end{equation*} and 
  \begin{equation*}
    \voln \biggl(\, \bigcap_{i = 1}^{N} \ball(\vec{p}_i, r_i) \biggr) \ge \voln \biggl(\, \bigcap_{i = 1}^{N} \ball(\vec{q}_i, r_i) \biggr)
  \end{equation*}
  for all \(r_1 > 0, \dots, r_N > 0\), where \(\voln\) is the Lebesgue measure in $\R^n$.
\end{theorem}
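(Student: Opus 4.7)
The approach is to apply Csik\'os's continuous Kneser--Poulsen theorem in $\R^{n+2}$ and then descend to $\R^n$ at the endpoints via Fubini slicing. Csik\'os's differential identity~\cite{Csikos1998} states that for any piecewise-smooth one-parameter family $\vec{p}(t)$ in $\R^m$ with fixed radii $r_i > 0$,
\[
  \frac{d}{dt} \vol_m\biggl(\bigcap_i \ball(\vec{p}_i(t), r_i)\biggr) = -\sum_{i<j} w_{ij}(t) \frac{d}{dt}\|\vec{p}_i(t) - \vec{p}_j(t)\|,
\]
with non-negative weights $w_{ij}(t) \ge 0$ coming from the intersection geometry, and an analogous formula of opposite sign for unions. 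Along any continuous expansion every pairwise distance is non-decreasing, so the $m$-volume of the intersection is non-increasing and of the union non-decreasing. I would apply this with $m = n+2$ to the given piecewise-smooth expansion from $\vec{p}$ to $\vec{q}$, obtaining
\[
  \vol_{n+2}\biggl(\bigcap_i \ball(\vec{p}_i, R_i)\biggr) \ge \vol_{n+2}\biggl(\bigcap_i \ball(\vec{q}_i, R_i)\biggr)
\]
for \emph{every} choice of positive radii $R_i$, since the expansion hypothesis concerns only the centres.

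Next, I would convert the $(n+2)$-dimensional comparison at the endpoints into an $n$-dimensional one via Fubini on $\R^{n+2} = \R^n \times \R^2$. For a configuration $\vec{p}\subset \R^n$, each $\R^2$-fibre of the $(n+2)$-ball intersection at $y\in \R^n$ is a disk of radius $\sqrt{\min_i (R_i^2 - \|y - \vec{p}_i\|^2)}$, so
\[
  \vol_{n+2}\biggl(\bigcap_i \ball(\vec{p}_i, R_i)\biggr) = \pi \int_{\bigcap_i \ball(\vec{p}_i, R_i)} \min_i\bigl(R_i^2 - \|y - \vec{p}_i\|^2\bigr)\,dy,
\]
where the balls on the right are now $n$-dimensional. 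Substituting into the $(n+2)$-inequality yields a \emph{weighted} $n$-dim comparison, parametrised by the radii $R_i$.

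The final step, and the main obstacle, is to extract from this weighted family the unweighted $n$-volume inequality at the original radii $r_i$. A natural attempt is to substitute $R_i = \sqrt{r_i^2 + s}$ and differentiate in $s$: a Leibniz computation (in which the boundary contribution vanishes because the integrand is zero on the boundary of the domain of integration) shows that the $s$-derivative of the right-hand side of the Fubini identity equals $\pi \voln(\bigcap_i \ball(\vec{p}_i, \sqrt{r_i^2 + s}))$, so taking $s = 0$ would deliver the required $n$-volume. The delicate point is that a pointwise inequality $G_{\vec{p}}(s) \ge G_{\vec{q}}(s)$ does not by itself force $G_{\vec{p}}'(0) \ge G_{\vec{q}}'(0)$, so the differentiation must be upgraded using the full parametric strength of the $(n+2)$-dim inequality (valid for all radii simultaneously). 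Closing this gap --- essentially an integral-geometric transform inversion --- is where the real work of the Bezdek--Connelly argument lies, and it is the step I would expect to occupy most of the technical effort.
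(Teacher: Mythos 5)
First, a point of comparison: the paper does not prove Theorem~\ref{thm:bezdekconnelly} at all --- it is imported as a black box from \cite{BezdekConnelly2002} --- so the only meaningful benchmark is Bezdek and Connelly's original argument. Your reconstruction of that argument is faithful as far as it goes: Csik\'os's formula does give monotonicity of the $(n+2)$-volume along the expansion for \emph{every} choice of radii, and your Fubini identity over the $\R^2$ factor, together with the substitution $R_i=\sqrt{r_i^2+s}$ and differentiation at $s=0$, is exactly the right device for descending to $\voln$. But the step you flag as ``the real work'' is a genuine gap, and nothing in your proposal closes it. Writing $h_{\vec{x}}(s)\coloneq\vol_{n+2}\bigl(\bigcap_i\ball(\vec{x}_i,\sqrt{r_i^2+s})\bigr)$, the pointwise family of inequalities $h_{\vec{p}}(s)\ge h_{\vec{q}}(s)$ for all $s\ge 0$ genuinely does not imply $h_{\vec{p}}'(0)\ge h_{\vec{q}}'(0)$, and no ``parametric strength'' of that pointwise family alone can repair this: two functions can satisfy the pointwise inequality for all $s$ while the derivative inequality at $s=0$ fails.

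What actually closes the gap in \cite{BezdekConnelly2002} is not the endpoint inequality but the \emph{integrated} form of Csik\'os's formula. Integrating the wall formula over the motion gives
\[
  h_{\vec{p}}(s)-h_{\vec{q}}(s)=\int_0^1\sum_{i<j}\frac{d}{dt}\norm{\vec{p}_i(t)-\vec{p}_j(t)}\cdot w_{ij}(t,s)\diff t ,
\]
with $w_{ij}(t,s)\ge 0$ the $(n+1)$-volumes of the walls for the configuration $\vec{p}(t)$ with radii $\sqrt{r_i^2+s}$. The decisive extra observation is that each $w_{ij}(t,s)$ is \emph{non-decreasing in $s$}: under the common inflation $r_i^2\mapsto r_i^2+s$ the radical hyperplanes and power cells are unchanged (they depend only on the differences $r_i^2-r_j^2$), while the truncating balls grow, so the walls can only grow. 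Since the distance derivatives are non-negative along an expansion, $s\mapsto h_{\vec{p}}(s)-h_{\vec{q}}(s)$ is therefore non-negative \emph{and non-decreasing}, so its right-derivative at $s=0$ is non-negative; by your own slicing identity that derivative equals $\pi\bigl[\voln\bigl(\bigcap_i\ball(\vec{p}_i,r_i)\bigr)-\voln\bigl(\bigcap_i\ball(\vec{q}_i,r_i)\bigr)\bigr]$, which is the claim (the union case is dual). This monotonicity-in-$s$ of the Csik\'os integrand is the missing idea --- it is a statement about the mechanism producing the inequality, not about the inequality itself --- and without it your argument stops exactly where you stopped.
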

Combining these facts allows us to prove a generalisation of Theorem~\ref{thm:2d} in largely the same way.

\begin{theorem}
  Let \(\gamma\) be any path in \(\R^n\). Then, the expected escape time satisfies \[
    J(\gamma) \ge J(\ell),
  \] where \(\ell\) is a straight line.
\end{theorem}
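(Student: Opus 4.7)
The strategy is to mirror the proof of Theorem~\ref{thm:2d} step for step, substituting Bezdek--Connelly (Theorem~\ref{thm:bezdekconnelly}) for the planar Kneser--Poulsen inequality. The derivation of the integral representation
\[
  J(\gamma) = \frac{1}{\voln(D)}\int_{0}^{\infty} \voln(S_{\gamma}(t)) \diff t,
\]
with $D$ now the unit ball in $\R^n$, goes through verbatim, so it again suffices to prove $\voln(S_{\gamma}(t)) \ge \voln(S_{\ell}(t))$ for every $t > 0$. As before I would approximate $S_{\gamma}(t)$ and $S_{\ell}(t)$ by the finite intersections $K_P(\gamma) = \bigcap_{i=0}^{k} \ball(-\gamma(t_i))$ and $K_P(\ell) = \bigcap_{i=0}^{k} \ball(-\ell(t_i))$ associated to partitions $P = \{0 = t_0 < \dots < t_k = t\}$, and then pass to the limit along a dense refining sequence.

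The core new ingredient is the inequality $\voln(K_P(\gamma)) \ge \voln(K_P(\ell))$, which I would obtain from Theorem~\ref{thm:bezdekconnelly} by exhibiting a piecewise-smooth expansive motion in $\R^{n+2}$ from $\vec{p} = (-\gamma(t_0), \dots, -\gamma(t_k))$ to $\vec{q} = (-\ell(t_0), \dots, -\ell(t_k))$. This motion is built in two stages. First, I view $\vec{p}$ as the vertex set of an open polygonal chain embedded in $\R^n \subset \R^{n+1}$ and invoke the Demaine--Eisenstat straightening theorem to produce an expansive motion in $\R^{n+1}$ whose terminal configuration is collinear, with the original edge lengths $\norm{\gamma(t_{i+1}) - \gamma(t_i)}$ preserved. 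Second, I stretch the edges one at a time: once the chain is straight, translating the tail $v_{i+1}, \dots, v_k$ outward along the line by $(t_{i+1}-t_i) - \norm{\gamma(t_{i+1}) - \gamma(t_i)}$ keeps all pairwise distances nondecreasing, and arc-length parametrisation guarantees this quantity is nonnegative. The final straight chain has consecutive spacings $t_{i+1}-t_i$ and total length $t$, matching $\vec{q}$ up to a rigid motion of $\R^{n+1}$, which can be absorbed by appending a final isometry. The extra dimension in $\R^{n+2}$ beyond the $\R^{n+1}$ used here is slack.

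Applying Theorem~\ref{thm:bezdekconnelly} gives $\voln(K_P(\gamma)) \ge \voln(K_P(\ell)) \ge \voln(S_{\ell}(t))$, the second inequality being $S_{\ell}(t) \subseteq K_P(\ell)$. Taking a refining sequence of partitions $P_n$ whose union is dense in $[0,t]$, the sets $K_{P_n}(\gamma)$ form a decreasing sequence of compact sets with intersection $S_{\gamma}(t)$ by continuity of $\gamma$, so continuity of Lebesgue measure from above yields $\voln(S_{\gamma}(t)) \ge \voln(S_{\ell}(t))$. Integrating in $t$ completes the proof.

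The main obstacle is constructing the expansive motion into the two extra dimensions. For $n \ge 3$, the Demaine--Eisenstat theorem is essential: there exist locked chains that cannot be straightened within $\R^n$ at all, so a single extra dimension is already indispensable, and the second is the slack afforded by Bezdek--Connelly. Once collinearity is achieved, the stretching stage is elementary, and the limit argument then parrots the planar case.
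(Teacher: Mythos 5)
Your proposal is correct and follows essentially the same route as the paper: reduce to the fixed-time volume inequality, straighten the discretised chain via Demaine--Eisenstat in $\R^{n+1}$, expand the straightened chain along the line to match the arc-length spacings, apply Theorem~\ref{thm:bezdekconnelly}, and pass to the limit over refining partitions. The only cosmetic difference is the stretching stage, where you translate the tails one edge at a time while the paper linearly interpolates all gap lengths simultaneously; both are valid piecewise-smooth expansive motions.
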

\begin{proof}
  As in the two-dimensional case, it suffices to prove that $\voln(S_{\gamma}(t)) \ge \voln(S_{\ell}(t))$ for any fixed $t > 0$.

  Let $P = \{0 = t_0 < t_1 < \dots < t_k = t\}$ be a partition of $[0, t]$, and define the point configuration $\vec{p} = (\vec{p}_0, \dots, \vec{p}_k)$ with $\vec{p}_i = -\gamma(t_i)$. This forms an \emph{open polygonal chain} in $\R^n$. Similarly, define $\vec{q} = (\vec{q}_0, \dots, \vec{q}_k)$, where $\vec{q}_i = -\ell(t_i)$, to be the corresponding point configuration for a straight line path. Without loss of generality due to rotational symmetry, we let $\ell(s) = (s, 0, \dots, 0)$, lying along the $x_1$-axis.

  Demaine and Eisenstat~\cite{Demaine2011} proved that any open chain in $\R^n$ can be straightened by an expansive motion when embedded in $\R^{n+1}$. Denote the expansion of $\vec{p}$ by $\vec{p}^*$. The specific motion given in their proof is analytic (as it is composed of sinusoidal functions), and thus $\vec{p}^*$ is a piecewise-smooth expansion of $\vec{p}$ in $\R^{n+1}$. We may view this motion as taking place in $\R^{n+2}$ by setting the $(n+2)$th coordinate to be zero, and thus the conditions of Theorem~\ref{thm:bezdekconnelly} are satisfied. Letting \[
    K_P(\gamma) = \bigcap_{i = 0}^{k} \ball(\vec{p}_i), \qquad 
    K_P(\ell) = \bigcap_{i = 0}^{k} \ball(\vec{q}_i)
  \] like before, we apply Theorem~\ref{thm:bezdekconnelly} to obtain
  \begin{equation} \label{eqn:inequality2}
    \voln(K_P(\gamma)) \ge \voln\biggl(\, \bigcap_{i = 0}^{k}\ball(\vec{p}^*_i) \biggr),
  \end{equation}
  where $\vec{p}^*_i$ is the $i$th point along the straightened configuration $\vec{p}^*$. The straightened chain $\vec{p}^*$ as constructed by the motion in \cite{Demaine2011} has edge lengths equal to the edge lengths of the original chain $\vec{p}$. That is, the expansion can be thought of as ``unfurling'' the chain, preserving the distances between consecutive vertices on the chain.

  To be precise, the distance between adjacent points $\vec{p}^*_i$ and $\vec{p}^*_{i+1}$ is $l_i = \norm{\vec{p}_{i+1} - \vec{p}_i}$. Since the volume of the intersection of balls is invariant under isometry, we may assume without loss of generality that $\vec{p}^*$ also lies on the $x_1$-axis, starting at the origin.

  In order to complete the proof, we must now construct a smooth expansion of $\vec{p}^*$ into $\vec{q}$. Let $L_j = \abs{t_{j+1} - t_j}$. Clearly, $l_j \le L_j$, due to the unit-speed parametrisation of the curve $\gamma$. Hence the desired expansion may be given by the motion $\vec{r}(\tau) = (\vec{r}_0(\tau), \dots, \vec{r}_k(\tau))$ where \[
    \vec{r}_j(\tau) = \Biggl(\,  \sum_{m = 0}^{j-1}\Bigl( \tau L_m + (1-\tau) l_m \Bigr), 0, \dots, 0 \Biggr),
    \] for $0 \le j \le k$ and $\tau \in [0, 1]$. This expansion is clearly smooth and keeps pairwise distances monotonically increasing, and we have $\vec{r}(0) = \vec{p}^*$, $\vec{r}(1) = \vec{q}$, so $\vec{q}$ is a (piecewise) smooth expansion of $\vec{p}^*$. Applying Theorem~\ref{thm:bezdekconnelly} a second time yields \[
    \voln\biggl(\, \bigcap_{i = 0}^{k}\ball(\vec{p}^*_i) \biggr) \ge 
    \voln\biggl(\, \bigcap_{i = 0}^{k}\ball(\vec{q}_i) \biggr) = \voln(K_P(\ell)),
    \] and by \eqref{eqn:inequality2}, we obtain \[
    \voln(K_P(\gamma)) \ge \voln(K_P(\ell)).
    \] Using the same continuity argument as in the proof of Theorem~\ref{thm:2d} implies \[
    \voln(S_{\gamma}(t)) \ge \voln(S_{\ell}(t)),
  \] and we obtain $J(\gamma) \ge J(\ell)$ by integrating over $t \in [0, \infty)$.
\end{proof}

\section{Expected linear distance to the boundary}

Having established the optimality of linear escape paths, we round out the investigation by computing the expected linear escape time. Since our paths are arc-length parametrised, this is equivalent to finding the expected linear distance to the boundary of the unit ball in $\R^n$. This quantity is closely related to classical chord-length questions for convex bodies. In particular, Kellerer~\cite{Kellerer1971} studies ``interior radiator randomness'' (a uniformly random interior point together with an independent uniformly random direction) and records, for the circle and sphere, the corresponding chord-length distribution. More recently, Hopkins et al.\ compute the average directional distance to the boundary of the unit disk and, in full generality, the unit $n$-ball~\cite[Theorem~2]{Hopkins2024}. Their answer is given in terms of a Wallis-type integral, and is equivalent to the closed form solution involving a ratio of gamma functions which we provide in Proposition~\ref{prop:linear-n}. We include our calculation here---it is shorter, geometric, and fits naturally with the symmetries already exploited in the preceding sections.

\subsection{Motivation: the disk}

We motivate our argument for the general case by starting in two dimensions in the unit disk $\ball^2 = \{(x, y) \in \R^2 : x^2 + y^2\le 1\}$. Choose a starting point $X$ uniformly in $\ball^2$, and independently, a random unit vector $U$ that represents the direction of travel. 
By rotational symmetry, we may fix $u = (1, 0)$, so the escape path is parallel to the $x$-axis. Let $R$ denote the $y$-coordinate of $X$, such that the path meets the $y$-axis at a point $Q(0, r)$, where $-1 \le r \le 1$. Let the chord intersect the circle at $A$ and $B$ as in Figure~\ref{fig:2d-diagram}.

\begin{figure}[htbp]
  \begin{tikzpicture}[
    scale=2.25,
    >=Stealth
    ]

    \draw[->,line width=0.6pt,black!60] (-1.3,0.) -- (1.3,0.) node[right,font=\small] {$x$};
    \draw[->,line width=0.6pt,black!60] (0.,-1.3) -- (0.,1.3) node[above,font=\small] {$y$};

    \draw[line width=0.6pt,blue!65!black] (0.,0.) circle (1.cm);

    \pgfmathsetmacro{\xp}{0.4}   
    \pgfmathsetmacro{\rp}{0.45}    

    \pgfmathsetmacro{\halfchord}{sqrt(1 - \rp*\rp)}
    \coordinate (A) at (\halfchord,\rp);
    \coordinate (B) at (-\halfchord,\rp);

    \coordinate (P) at (\xp,\rp);
    \coordinate (Q) at (0,\rp);

    \draw[line width=0.8pt,color=red!70!black] (P)--(A);
    \draw[line width=0.8pt,color=black!40,dashed] (P)--(B);

    \node[anchor=west,font=\small, xshift = 2pt] at (A) {$A$};
    \node[anchor=east,font=\small, xshift = -2pt] at (B) {$B$};
    \node[anchor=north,font=\small, yshift = -2pt] at (P) {$P$};
    \node[anchor=north east,font=\small, yshift = -2pt] at (Q) {$Q(0,r)$};

    \node[circle,fill=black!80,inner sep=1.25pt] at (P) {};
    \node[circle,fill=black!80,inner sep=1.25pt] at (A) {};
    \node[circle,fill=black!80,inner sep=1.25pt] at (B) {};
    \node[circle,fill=black!80,inner sep=1.25pt] at (Q) {};
  \end{tikzpicture}
  \caption{A linear escape path for the unit disk $\ball^2$.}
  \label{fig:2d-diagram}
\end{figure}
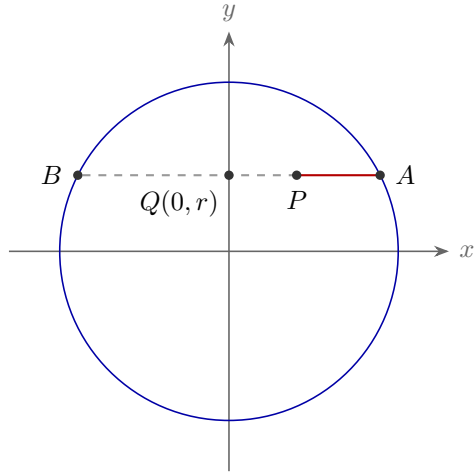

As we are just as likely to escape via $A$ as we are via $B$, it suffices to find the expected length of \[
  L(r) = \frac{1}{2} AP + \frac{1}{2} PB = \frac{1}{2} (AP + PB) = \frac{1}{2} AB = \sqrt{1 - r^2}.
  \] To calculate the marginal density of $R$, we fix $r$ and integrate over $x$ which has bounds $-\sqrt{1-r^2} \le x \le \sqrt{1-r^2}$. This gives $f_R(r) = \int_{-\sqrt{1-r^2}}^{\sqrt{1-r^2}} f(x, r) \diff x$, where $f(x, r)$ is the joint density. Since $X$ is uniform in the disk, we simply have $f(x, r) = 1/\area(\ball^2) = 1/\pi$. Hence $f_R(r) = 2\sqrt{1-r^2}/\pi$, and the expected escape time is thus \[
  \E[L] = \int_{-1}^{1} L(r)f_R(r) \diff r = \int_{-1}^{1} \frac{2}{\pi} \bigl(1 - r^2\bigr) \diff r = \frac{8}{3\pi}.
\] In other words, the expected linear distance to the boundary of the unit circle, starting from a uniformly distributed point within, is $\frac{8}{3\pi}$.

\subsection{Generalisation to higher dimensions}

Our calculation for the disk carries over to higher dimensions cleanly. The general idea remains the same---reduce to a single radial parameter $r$ and weight by the size of the corresponding level set. Denote the unit $n$-ball by $\ball^n \coloneq \{x \in \R^n : \norm{x} \le 1\}$, where $\norm{\blank}$ is the Euclidean norm in $\R^n$.

\begin{proposition}\label{prop:linear-n}
  Let \(X\) be a random variable distributed uniformly in \(\ball^n\). Let \(\ell\) be a straight line path starting at \(X\) with a direction chosen uniformly at random. Then, the expected escape time is \[
    J(\ell) = \frac{2}{\sqrt{\pi}}\, \frac{\Gamma\bigl(\frac{n+2}{2}\bigr)}{\Gamma\bigl(\frac{n+3}{2}\bigr)}.
  \]
\end{proposition}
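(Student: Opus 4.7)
The plan is to mirror the two-dimensional argument by first fixing the direction of travel and then integrating out the spatial coordinates. By independence of $X$ from the random direction and by rotational symmetry of $\ball^n$, I would condition on the direction and assume without loss of generality that it equals $e_1 = (1, 0, \dots, 0)$. Writing $X = (X_1, X')$ with $X' = (X_2, \dots, X_n) \in \R^{n-1}$, the path $X + t e_1$ exits $\ball^n$ precisely when $(X_1 + t)^2 + \norm{X'}^2 = 1$, so the escape time is $T = \sqrt{1 - \norm{X'}^2} - X_1$.

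The key simplification is that, conditional on $X'$, the coordinate $X_1$ is uniform on the symmetric interval $\bigl[-\sqrt{1-\norm{X'}^2},\sqrt{1-\norm{X'}^2}\bigr]$, so $\E[X_1 \mid X'] = 0$ and therefore $J(\ell) = \E\bigl[\sqrt{1-\norm{X'}^2}\bigr]$. This is the direct $n$-dimensional analogue of the observation in the planar case that the expected escape distance equals half the chord length through $X$. The marginal density of $X'$ on $\ball^{n-1}$ is, by Fubini, proportional to the chord length at that slice, giving $f_{X'}(x') = (2/V_n)\sqrt{1-\norm{x'}^2}$ where $V_n = \pi^{n/2}/\Gamma\bigl((n+2)/2\bigr)$ is the volume of $\ball^n$. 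Combining these reduces the problem to evaluating
\[
  J(\ell) = \frac{2}{V_n}\int_{\ball^{n-1}} \bigl(1 - \norm{x'}^2\bigr)\diff x'.
\]

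The final step is to compute this integral, which is a standard Dirichlet/Beta-function exercise: passing to polar coordinates in $\R^{n-1}$ gives a radial integral $\int_0^1 r^{n-2}(1-r^2)\diff r$ weighted by the surface area of $\sphere^{n-2}$, and a short Beta-function computation yields $\pi^{(n-1)/2}/\Gamma\bigl((n+3)/2\bigr)$. Substituting and cancelling a factor of $\sqrt{\pi}$ against $V_n$ produces the announced ratio of gamma functions.

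No step presents a genuine obstacle; the only care required is the symmetry argument that allows me to drop the $X_1$ term (which justifies in the general case what Figure~\ref{fig:2d-diagram} makes visually obvious for the disk), together with the book-keeping to extract the marginal of $X'$ cleanly. After that, the calculation is essentially a single Beta-function identity.
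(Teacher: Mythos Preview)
Your proposal is correct and follows essentially the same route as the paper: fix the direction by rotational symmetry, reduce to the half-chord length $\sqrt{1-\norm{X'}^2}$ via the symmetry of $X_1$ about $0$, compute the marginal of $X'$, and evaluate the resulting integral $\frac{2}{V_n}\int_{\ball^{n-1}}(1-\norm{x'}^2)\diff x'$ in polar coordinates. The only cosmetic differences are that you phrase the half-chord reduction as $\E[X_1\mid X']=0$ and package the radial integral as a Beta value, whereas the paper integrates $r^{n-2}-r^n$ directly and then simplifies with the standard formulas for $\omega_n$ and $\sigma_{n-2}$.
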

\begin{proof}
  Due to rotational symmetry, we may assume without loss of generality that the direction of travel is $e_1 = (1, 0, \dots, 0)$, parallel to the $x_1$-axis. Write $x = (x_1, y)$ where $y \in \R^{n-1}$ represents the components orthogonal to $x_1$. 

  As we travel along the path, only the $x_1$-coordinate varies. Thus the path intersects the ball when $x_1^2 + \norm{y}^2 = 1$ which occurs when \[
    x_1 = \pm \sqrt{1 - \norm{y}^2}.
    \] Since the starting point is uniformly distributed along this chord, the expected distance to the boundary is simply half the length of the chord: \[
    L(y) = \sqrt{1 - \norm{y}^2}.
    \]We now calculate the marginal density of $Y$ by fixing $y$, the last $n-1$ coordinates, and integrating over $x_1$. If $y$ is held constant, the bounds for $x_1$ are $-\sqrt{1 - \norm{y}^2} \le x_1 \le  \sqrt{1 - \norm{y}^2}$, so \[
    f_Y(y) = \int_{-\sqrt{1 - \norm{y}^2}}^{\sqrt{1 - \norm{y}^2}} f(x_1, x_2, \dots, x_n) \diff x_1,
    \] where $f$ is the joint density. Since $X$ is uniform in $\ball^n$, the joint density is simply $\frac{1}{\omega_{n}}$, where $\omega_n$ denotes the volume of the unit ball $\ball^n$. Hence, we have\[
    f_Y(y) = \frac{2\sqrt{1-\norm{y}^2}}{\omega_n},
  \] and it remains to calculate
  \begin{equation} \label{eqn:ev-integral}
    \E[L] = \int_{\ball^{n-1}} \sqrt{1-\norm{y}^2}\, f_Y(y) \diff y = \frac{2}{\omega_n} \int_{\ball^{n-1}} 1 - \norm{y}^2 \diff y.
  \end{equation}
  Note that the integral is over $\ball^{n-1}$ because $\norm{y} \le 1$, implying that $y \in \ball^{n-1}$ since $y$ is a vector in $\R^{n-1}$.

  To evaluate this integral, we use spherical coordinates. Let $y = r\theta$, where $r = \norm{y} \in [0, 1]$ and $\theta \in \sphere^{n-2}$, where $\sphere^{n-2} = \partial \ball^{n-1}$ is the unit sphere in $\R^{n-1}$. Since the domain $\ball^{n-1}$ has dimension $n-1$, the volume element transforms as $\diff y = r^{n-2} \diff r \diff \sigma$, where $\sigma$ is the surface measure on $\sphere^{n-2}$. This gives \[
    \int_{\ball^{n-1}} 1 - \norm{y}^2 \diff y = \int_{\sphere^{n-2}} \int_{0}^{1} \bigl(1 - r^2\bigr)r^{n-2} \diff r \diff \sigma.
    \] Since the integrand is independent of $\sigma$, we can separate the integral to get \[
    \biggl(\, \int_{\sphere^{n-2}} \diff \sigma \biggr) \int_{0}^{1} \bigl(1 - r^2\bigr)r^{n-2} \diff r = \sigma_{n - 2} \int_{0}^{1} r^{n-2} - r^{n} \diff r,
  \] where $\sigma_{n-2}$ denotes the surface area of $\sphere^{n-2}$.

  The remaining integral evaluates to \[
    \int_{0}^{1} r^{n-2} - r^{n} \diff r = \frac{1}{n-1} - \frac{1}{n+1} = \frac{2}{n^2 - 1}.
    \] Substituting this back into \eqref{eqn:ev-integral} yields \[
    \E[L] = \frac{2}{\omega_n} \int_{\ball^{n-1}} 1 - \norm{y}^{2} \diff y = \frac{2}{\omega_n} \cdot \frac{2 \sigma_{n-2}}{n^2 - 1}.
    \] Using the standard identities \[
    \omega_n=\frac{\pi^{n/2}}{\Gamma\bigl(\frac{n}{2}+1\bigr)}, \qquad \sigma_{n-2}=\frac{2\pi^{(n-1)/2}}{\Gamma\bigl(\frac{n-1}{2}\bigr)}
    \] and simplifying gives \[
    \E[L] = \frac{2}{\sqrt{\pi}}\, \frac{\Gamma\bigl(\frac{n+2}{2}\bigr)}{\Gamma\bigl(\frac{n+3}{2}\bigr)}.
  \] Since $\ell$ is a unit-speed path, $J(\ell)$ is exactly this expected distance, completing the proof.
\end{proof}

\section{Open problems}

In this paper, we have established the optimality of a straight line escape path for forests shaped like a ball in $\R^n$. Our proof heavily relied on the Kneser--Poulsen conjecture to minimise the integral of an area function, which is the intersection of balls. Thus our argument does not generalise to other convex bodies.

Despite this, we suspect that a straight line path is still optimal in a large class of domains, especially regular polygons. Having performed numerical calculations that support this, we propose the following conjecture.

\begin{conjecture}
  Consider a forest in the shape of a regular polygon in $\R^2$ (e.g., an equilateral triangle or a square). We conjecture that among all unit-speed paths starting from a uniformly distributed point, the expected escape time is minimised by a straight line path.
\end{conjecture}

\bibliographystyle{amsplain}
\bibliography{references}

@inproceedings{Demaine2011,
  AUTHOR = {Erik D. Demaine and Sarah Eisenstat},
  TITLE = {Expansive Motions for $d$-Dimensional Open Chains},
  BOOKTITLE = {Proceedings of the 23rd Canadian Conference on Computational
               Geometry (CCCG 2011)},
  bookurl = {http://2011.cccg.ca/},
  ADDRESS = {Toronto, Ontario, Canada},
  MONTH = {August 10--12},
  YEAR = 2011,
  PAGES = {235--240},
  length = {6 pages},
  withstudent = 1,
  unrefereed = 1,
  dblp = {https://dblp.org/rec/conf/cccg/EisenstatD11},
  ee = {http://www.cccg.ca/proceedings/2011/papers/paper96.pdf},
}

@inbook{Csikos2018,
  author = "Csik{\'o}s, Bal{\'a}zs",
  title = "On the Volume of Boolean Expressions of Balls -- A Review of the {K}neser--{P}oulsen Conjecture",
  bookTitle = "New Trends in Intuitive Geometry",
  year = "2018",
  publisher = "Springer Berlin Heidelberg",
  address = "Berlin, Heidelberg",
  pages = "65--94",
  isbn = "978-3-662-57413-3",
  doi = "10.1007/978-3-662-57413-3_4",
  url = "https://doi.org/10.1007/978-3-662-57413-3_4",
}

@techreport{Gross55,
  author = {Gross, Oliver Alfred},
  title = {A Search Problem Due to {B}ellman},
  institution = {The RAND Corporation},
  address = {Santa Monica, California},
  year = {1955},
  number = {RM-1603},
  type = {U.S. Air Force Project RAND Research Memorandum},
}

@unpublished{Ward08,
  author = {Ward, John W.},
  title = {Exploring the {B}ellman Forest Problem},
  year = {2008},
  note = {unpublished, available at \url{
          https://wardsattic.com/math/BellmanForestProblem/BellmanForestProblem.pdf
          }},
}

@article{Gluss61,
  author = {Gluss, Brian},
  title = {An alternative solution to the ``lost at sea'' problem},
  journal = {Naval Research Logistics Quarterly},
  volume = {8},
  number = {1},
  pages = {117-122},
  doi = {https://doi.org/10.1002/nav.3800080108},
  url = {https://onlinelibrary.wiley.com/doi/abs/10.1002/nav.3800080108},
  eprint = {https://onlinelibrary.wiley.com/doi/pdf/10.1002/nav.3800080108},
  year = {1961},
}

@misc{Deng24,
  author = {Deng, Zhipeng},
  title = {A General Solution to {B}ellman's Lost-in-a-forest Problem},
  year = {2024},
  note = {preprint available at \url{https://arxiv.org/abs/2412.10686}},
}

@misc{Finch16,
  author = {Finch, Steven R. and Shonder, John A.},
  title = {Lost at Sea},
  year = {2016},
  note = {unpublished note available at \url{https://arxiv.org/abs/math/0411518}
          },
}

@article{bellman,
  author = {Richard Bellman},
  title = {Minimization problem},
  journal = {Bulletin of the American Mathematical Society},
  year = {1956},
  volume = {62},
  pages = {270},
}

@article{finch,
  author = {Steven R. Finch and John E. Wetzel},
  title = {Lost in a Forest},
  journal = {The American Mathematical Monthly},
  year = {2004},
  volume = {111},
  number = {8},
  pages = "645--654",
}

@article{BezdekConnelly2002,
  author = {Bezdek, K{\'a}roly and Connelly, Robert},
  title = {Pushing disks apart---the {K}neser--{P}oulsen conjecture in the plane
           },
  journal = {Journal f{\"u}r die reine und angewandte Mathematik},
  volume = {553},
  year = {2002},
  pages = {221--236},
  doi = {10.1515/crll.2002.101},
}

@article{Csikos1998,
  author = {Csik{\'o}s, Bal{\'a}zs},
  date = {1998/12/01},
  doi = {10.1007/PL00009395},
  isbn = {1432-0444},
  journal = {Discrete \& Computational Geometry},
  number = {4},
  pages = {449--461},
  title = {On the Volume of the Union of Balls},
  url = {https://doi.org/10.1007/PL00009395},
  volume = {20},
  year = {1998},
}

@article{Kneser1955,
  author = {Kneser, Martin},
  title = {Einige Bemerkungen {\"u}ber das {M}inkowskische {F}l{\"a}chenmass},
  journal = {Archiv der Mathematik},
  volume = {6},
  year = {1955},
  pages = {382--390},
}

@article{Poulsen1954,
  author = {Poulsen, Ebbe Thue},
  title = {Problem 10},
  journal = {Mathematica Scandinavica},
  volume = {2},
  year = {1954},
  pages = {346},
}

@article{Hopkins2024,
  author = {Hopkins, Bailey and Ong, Boon Wee and Previte, Joseph P. and Previte
            , Michelle and Wittmershaus, Bruce P.},
  title = {The Average Directional Distance To The Boundary Of A Ball Or Disk},
  journal = {Applied Mathematics E-Notes},
  volume = {24},
  year = {2024},
  pages = {457--462},
  note = {Received 14 June 2023},
  url = {https://www.math.nthu.edu.tw/~amen/2024/AMEN-A230614.pdf},
}

@article{Kellerer1971,
  author = {Kellerer, Albrecht M.},
  title = {Considerations on the Random Traversal of Convex Bodies and Solutions
           for General Cylinders},
  journal = {Radiation Research},
  volume = {47},
  year = {1971},
  pages = {359--376},
  url = {https://epub.ub.uni-muenchen.de/6318/1/6318.pdf},
}

@article{Lang86,
  author = {Lang, Robert},
  title = {A note on the measurability of convex sets},
  journal = {Archiv der Mathematik},
  volume = {47},
  number = {1},
  pages = {90--92},
  year = {1986},
  publisher = {Springer}
}

\end{document}